\newtheorem{thm}{Theorem}[section]
\newtheorem{lem}[thm]{Lemma}
\newtheorem{prop}[thm]{Proposition}
\newtheorem{cor}[thm]{Corollary}
\newcommand{\R}{\mathbb{R}}
\newcommand{\Z}{\mathbb{Z}} 
\newcommand{\N}{\mathbb{N}}
\newcommand{\torus}{\mathbb{S}^1}
\newcommand{\dnm}{I_{n - 1}}
\newcommand{\dn}{I_n}
\newcommand{\dnp}{I_{n + 1}}
\newcommand{\dnmi}{I_{n - 1}^i}
\newcommand{\pn}{\mathcal{P}_n}
\newcommand{\pnp}{\mathcal{P}_{n+1}}
\newcommand{\hone}{\hspace{1cm}}
\newcommand{\swiatek}{\'Swi\k{a}tek}
\begin{document}

\date{}

\title[Hausdorff dimension for multicritical circle maps]{On the Hausdorff dimension of invariant measures for multicritical circle maps}

\author{Frank Trujillo}
\address{IMJ-PRG, UP7D\\ 58-56 Avenue de France\\ 75205 Paris Cedex 13\\ France}
\email{frank.trujillo@imj-prg.fr}

\maketitle

\begin{abstract}
We give explicit bounds for the Hausdorff dimension of the unique invariant measure of  $C^3$ multicritical circle maps without periodic points. These bounds depend only on the arithmetic properties of the rotation number. 
\end{abstract}

\section{Introduction}

The object of study in this work are \textit{$C^r$ multicritical circle maps} of the circle  $\torus = \R \setminus \Z$. A map $f: \torus \rightarrow \torus$ is called a  $C^r$ multicritical circle map if it is an orientation preserving circle homeomorphism of class $C^r$ having finitely many critical points $c_0, \dots, c_{n - 1}$ and taking the form
\[x \mapsto  f(c_i) +  x|x|^{p_i - 1}\]
in a suitable coordinate system around each critical point for some $p_i > 1$. The real number $p_i$ is called the \textit{power-law exponent} or \textit{criticality} of the critical point $c_i$.  Let $f : \torus \rightarrow \torus$ be an orientation-preserving circle homeomorphism and let $F : \R \rightarrow \R$ be a lift of $f$, that is, a continuous homeomorphism of $\R$ such that $F(x+1) = F(x) + 1$ and $F (x) (\bmod$ $1) = f(x)$ for all $x \in \R$. By a classical result of Poincar\'e the limit 
\[ \rho(f) = \lim_{n \to \infty} \dfrac{F^n(x)}{n} \mod 1\]
is well defined and  independent of the value $x \in \R$ initially chosen. This limit is called the \textit{rotation number} of $f$.  We say that a multicritical circle map is \textit{irrational} if its rotation number is irrational. Recall that the rotation number of a circle homeomorphism is irrational if and only the map does not possess any periodic points. Furthermore, maps with irrational rotation number admit a unique invariant measure. For a simple proof of this fact we refer the reader to \cite{furstenberg_strict_1961}. \\

It was proven by Khanin in \cite{khanin_universal_1991} that the unique invariant measure $\mu$ of  any irrational multicritical circle map $f$ is \textit{singular} with respect to the Lebesgue measure, that is, there exists a measurable set $X \subset \torus$ of zero Lebesgue measure for which $\mu(X) = 1$. An alternative proof of this fact was given  in \cite{graczyk_singular_1993} by Graczyk and \swiatek. Since this unique invariant measure is singular, it is natural to investigate its \textit{Hausdorff dimension}.    The Hausdorff dimension of any probability Borel measure on $\torus$ is defined as
\[\dim_H(\mu) := \inf \{ \dim_H(X) \mid \mu(X) = 1 \}, \] 
where $dim_H(X)$ denotes the Hausdorff dimension of the set $X \subset \torus$. We provide the formal definition of Hausdorff dimension in the next section.   \\

By a classical result of Yoccoz \cite{yoccoz_il_1984} any $C^3$ irrational critical circle map  $f$ is topologically conjugated to the irrational rotation $x \mapsto x + \rho(f)$. Hence any two irrational multicritical circle maps  with the same rotation number are conjugated to each other by a $C^0$ homeomorphism.  It was proven in \cite{estevez_real_2018} by Estevez and de Faria that if these two maps have the same number of critical points any conjugacy between them is also quasisymmetric. In the same work the authors conjecture that for $C^3$ irrational multicritical circle maps their differentiable conjugacy class depends only on their \textit{signature}. Given a multicritical circle map $f$ with $n_f$ critical points $c_0, \dots, c_{n_f - 1}$ and unique invariant measure $\mu_f$ its signature is the $(2n_f + 2)$ tuple 
\[ (n_f, \rho(f); p_0, \dots, p_{n_f - 1}; \lambda_0, \dots, \lambda_{n_f - 1}),\]
where $p_i$ is the criticality of $c_i$ and $\lambda_i = \mu_f[c_i, c_{i + 1}]$. Since the Hausdorff dimension of a set is preserved by diffeomorphisms,  the conjecture implies that the Hausdorff dimension of the unique invariant measure of $C^3$ irrational multicritical circle maps depends only on their signature. The previous conjecture is a generalization of the corresponding one for unicritical circle maps, where the only invariants are the rotation number and the criticality of the unique critical point. Guarino, Martens and De Melo \cite{guarino_rigidity_2018}  recently proved a weak version of this conjecture for $C^4$ irrational unicritical maps with the same odd criticality. \\

The main result of this paper gives a relation between the Hausdorff dimension for the unique invariant measure of a $C^3$ irrational multicritical circle map and the arithmetic properties of its rotation number.   We will recover, as a by-product of our constructions (Corollary \ref{prop: singular_measure}), a proof of the singularity of the unique invariant measure of irrational multicritical maps, in the same spirit of that in \cite{khanin_universal_1991} for the unicritical case, for maps whose rotation number is not of bounded type. Before stating our main result let us introduce some notations that will be used throughout this work.  A real number $\alpha$ is said to be \textit{Diophantine} if  there exist $\gamma > 0$ and  $\tau \geq 0$ such that 
\begin{equation}
\label{Diophantine}
\left|  \alpha -\dfrac{p}{q} \right| \geq \dfrac{\gamma}{q^{\tau+2}} \hone \text{ for all } p, q \in \Z, \, q \neq 0.
\end{equation}
Given $\gamma > 0$ and  $\tau \geq 0$ we denote by $\mathcal{D}(\gamma,\tau)$ the set of real numbers $\alpha$ verifying (\ref{Diophantine}).  A number in $\mathcal{D}(\gamma,\tau)$ is called a \textit{Diophantine number of type} $(\gamma, \tau)$. Let 
\[ \mathcal{D_\tau} = \bigcup_{\gamma > 0} \mathcal{D}(\gamma,\tau), \hspace{2cm} \mathcal{D} = \bigcup_{\tau \geq 0} \mathcal{D_\tau}. \] 
Recall that for any $\tau > 0$ the set $\mathcal{D}_\tau$ has full Lebesgue measure while $\mathcal{D}_0$ has zero Lebesgue measure. A number in $\mathcal{D}_0$ is said to be of \textit{bounded type}. We now state the main result of this note.

\begin{thm} 
\label{thm: main_thm}
Let  $f$ be an irrational multicritical circle map with unique invariant measure $\mu$. There exists a positive constant $\nu$, depending only on $\rho(f)$, such that the following holds: 
\begin{itemize}
    \item If $\rho(f) \in \mathcal{D}_{\tau} $ for some $\tau \geq 0$ then
    \[ \dim_H(\mu) \geq \dfrac{1}{2\tau +\nu}. \]
    \item If $\rho(f) \notin \mathcal{D}_{\tau}$ for some $\tau > 0$ then 
    \[ \dim_H(\mu) \leq  \dfrac{1}{\tau + 1}.\]
\end{itemize}
\end{thm}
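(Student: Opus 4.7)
The plan is to work with the dynamical partition $\pn$ induced by the continued fraction expansion $\rho(f) = [a_1, a_2, \dots]$ with convergents $p_n / q_n$. Fixing a critical point $c_0$ of $f$, let $\dn$ denote the arc from $c_0$ to $f^{q_n}(c_0)$; its $q_n$ forward iterates together with the $q_{n-1}$ forward iterates of $\dnm$ tile $\torus$ to form $\pn$. By $f$-invariance of $\mu$ one has $\mu(\dni) = \mu(\dn)$ for every $i$, and the relation $q_n \mu(\dn) + q_{n-1} \mu(\dnm) = 1$ gives $\mu(\dn) \asymp 1/q_n$ with universal constants. The real bounds for $C^3$ multicritical circle maps (as used, for instance, in \cite{estevez_real_2018}) provide the geometric counterpart: atoms of $\pn$ of the same generation have comparable Lebesgue length, and $|\dn|$ decays geometrically in $n$ with constants that depend only on $\rho(f)$.

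For the lower bound I would invoke the mass distribution principle. For $\mu$-a.e.\ $x$ and sufficiently small $r$, choose $n$ so that $r \asymp |\dni|$ for the atom $\dni$ of $\pn$ containing $x$; by the real bounds, $B(x,r)$ meets only $O(1)$ atoms of $\pn$, so $\mu(B(x,r)) \leq C/q_n$. The Diophantine assumption $\rho(f) \in \mathcal{D}(\gamma,\tau)$ translates, via $|q_n \rho(f) - p_n| \asymp 1/q_{n+1}$, into the bound $q_{n+1} \leq C q_n^{\tau + 1}$. Combining this with a quantitative lower estimate of the form $|\dn| \geq c\, q_n^{-(\tau + \nu/2)}$ obtained from the real bounds yields $\mu(B(x,r)) \leq C r^{1/(2\tau + \nu)}$, whence $\dim_H(\mu) \geq 1/(2\tau + \nu)$.

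For the upper bound, if $\rho(f) \notin \mathcal{D}_\tau$, there is a subsequence $(n_k)$ along which $q_{n_k + 1}/q_{n_k}^{\tau + 1} \to \infty$. Setting $U_K = \bigcup_{k \geq K} \bigcup_{0 \leq i < q_{n_k}} f^i(I_{n_k})$, the identity $\mu\bigl(\bigcup_{i} f^i(I_{n_k})\bigr) = q_{n_k} \mu(I_{n_k}) \geq c$ shows that $X = \bigcap_K U_K$ has full $\mu$-measure. Its $d$-dimensional Hausdorff content is bounded above by $\sum_{k \geq K} q_{n_k} |I_{n_k}|^d$. Using the real bounds to relate $|I_{n_k}|$ to $q_{n_k + 1}$, and invoking the growth hypothesis $q_{n_k + 1} \gg q_{n_k}^{\tau + 1}$, one verifies that this series is summable for every $d > 1/(\tau + 1)$, producing $\dim_H(\mu) \leq 1/(\tau + 1)$.

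The hard part will be producing the quantitative bound $|\dn| \geq c\, q_n^{-(\tau + \nu/2)}$, together with its matching upper counterpart required for the second half, from the real bounds with a constant $\nu$ depending only on $\rho(f)$. While the geometric decay of the atoms is by now classical, controlling its rate uniformly in the multicritical setting, where several critical points with heterogeneous criticalities can introduce non-uniform distortion, requires a careful analysis of the cross-ratio distortion bootstrapped with Denjoy--Koksma-type estimates. The fact that the final constant $\nu$ can be taken independent of the criticalities should reflect that the relevant distortion inequalities enjoy universal control in terms of the combinatorics of the orbit, i.e., the partial quotients of $\rho(f)$.
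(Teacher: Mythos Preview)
Your lower-bound sketch is essentially the paper's argument: Frostman's lemma (your mass distribution principle) together with a lower bound on the smallest atom of $\mathcal{P}_{n+1}$. The paper uses the explicit estimate
\[
\min_{\Delta \in \mathcal{P}_{n+1}} |\Delta| \geq \dfrac{M^{-(n+1)}}{(a_1 a_2 \cdots a_{n+1})^2},
\]
which is exactly the ``hard part'' you identify and which follows from the real bounds (specifically from the quadratic profile of the bridges). This yields $\nu = \nu_1 + \nu_2 \log M$ with $\nu_1 = \limsup_n 2\log(a_1\cdots a_n)/\log q_n$ and $\nu_2 = \limsup_n n/\log q_n$, both determined by $\rho(f)$ alone.

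Your upper-bound argument, however, has a genuine gap. First, the measure claim $q_{n_k}\mu(I_{n_k}) \geq c$ is false: from $q_{n+1}\delta_n + q_n \delta_{n+1} = 1$ one gets $\delta_n \leq 1/q_{n+1}$, hence $q_{n_k}\mu(I_{n_k}) \leq q_{n_k}/q_{n_k+1} \leq 1/a_{n_k+1} \to 0$ along the chosen subsequence. So your set $X$ has $\mu$-measure zero, not one. Second, and more fundamentally, the step ``use the real bounds to relate $|I_{n_k}|$ to $q_{n_k+1}$'' rests on diffeomorphism intuition that fails here. For a rigid rotation one does have $|I_n| \asymp 1/q_{n+1}$, but the first real bound for multicritical maps says precisely the opposite: adjacent atoms of $\mathcal{P}_n$ are comparable, so $|I_n| \asymp_M |I_{n-1}|$ and the lengths decay only geometrically, independently of $a_{n+1}$. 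Thus no cover by the atoms $f^i(I_{n_k})$ themselves can detect the Liouvillean growth of $q_{n_k+1}$.

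The paper exploits instead the \emph{internal} geometry of each long atom $I_{n-1}^i$ when refined into $\mathcal{P}_{n+1}$. By Yoccoz's lemma the atoms $\Delta_{i,j}^{(n)}$ inside a bridge satisfy $|\Delta_{i,j}^{(n)}| \asymp |I_{n-1}^i|/\min\{j-k_s^{(n)},\, k_{s+1}^{(n)}-j\}^2$, so the central ones are tiny in Lebesgue measure yet all carry the same $\mu$-mass $\delta_n$. For $0<\gamma<1$ one keeps only the $\Delta_{i,j}^{(n)}$ at distance $\geq a_{n+1}^\gamma$ from the critical spots; the resulting set $A_{i,\gamma}^n \subset I_{n-1}^i$ has $|A_{i,\gamma}^n| \lesssim |I_{n-1}^i|/a_{n+1}^\gamma$ while $\mu\big(\bigcup_i A_{i,\gamma}^n\big) \to 1$. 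Covering the $\limsup$ of these sets by the pieces $A_{i,\gamma}^{n_k}$ gives
\[
\sum_{i=0}^{q_{n_k}-1} |A_{i,\gamma}^{n_k}|^d \lesssim a_{n_k+1}^{-d\gamma} \sum_i |I_{n_k-1}^i|^d \lesssim a_{n_k+1}^{-d\gamma}\, q_{n_k}^{1-d} \leq q_{n_k}^{1-d(\gamma\tau+1)},
\]
which is summable for $d > 1/(\tau+1)$ once $\gamma$ is close to $1$. The presence of critical points is thus not an obstacle to be controlled but the very mechanism producing the small covers.
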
 

The upper and lower bounds in Theorem \ref{thm: main_thm} will be proven in Propositions \ref{prop: upper_bound} and \ref{lowerbound} respectively. \\

To end this section let us make a few comments about several related results. Graczyk and \'Swi\k{a}tek studied in \cite{graczyk_singular_1993}  the Hausdorff dimension of the unique invariant measure of multicritical circle maps with rotation number of bounded type. For this particular class the authors show that the associated Hausdorff dimension is bounded away from $0$ and $1$.  Theorem \ref{thm: main_thm} allows to recover the lower bound in \cite{graczyk_singular_1993} while providing an explicit estimate. \\

For circle diffeomorphisms, it follows from the works of Herman \cite{herman_sur_1979} and Yoccoz \cite{yoccoz_conjugaison_1984} that sufficiently regular circle diffeomorphisms are smoothly conjugated to a rigid rotation provided its rotation number is Diophantine. Hence, for any smooth circle diffeomorphism with Diophantine rotation number, its unique invariant measure is equivalent to the Lebesgue measure and therefore its Hausdorff dimension is equal to one.  On the other hand, for any $0 \leq \beta \leq 1$ and any \textit{Liouville} number $\alpha$,  that is, any non-Diophantine irrational number, Sadovskaya \cite{sadovskaya_dimensional_2009} has constructed, using the Anosov-Katok method \cite{anosov_new_1970}, examples of smooth diffeomorphisms with rotation number $\alpha$ whose unique invariant measure has Hausdorff dimension $\beta$.  \\

In the case of \textit{circle homeomorphisms with a break}, i.e. smooth diffeomorphisms with a singular point where the derivative has a jump discontinuity, Khanin and Koci\'c \cite{khanin_hausdorff_2017} have shown that for almost any irrational number $\alpha$  the unique invariant measure of a $C^{2 + \epsilon}$ circle homeomorphism with a break and rotation number equal to $\alpha$ has zero Hausdorff dimension.  \\

\section{Preliminaries}

\subsection{Continued fractions} 

We state some basic results on continued fractions and Diophantine properties. For more details see \cite{lang_introduction_1995}.  Let $\alpha \in (0,1)$ irrational and denote  by $ [a_1,a_2,a_3,\dots]$ its \textit{continued fraction expansion}
\[ \alpha = \cfrac{1}{a_1 + \cfrac{1}{a_2+ \cfrac{1}{a_3 +\cfrac{1}{\cdots}}}} = [a_1,a_2,a_3,\dots].\]
The \textit{n-th convergent} of $\alpha$ is given by \[ \dfrac{p_n}{q_n} = [a_1,a_2,a_3,\dots,a_n] \]
and satisfies 
\[ \min_{1 \leq p, q \leq q_n} \left|  \alpha -\dfrac{p}{q} \right| = (-1)^{n + 1}\left(  \alpha -\dfrac{p_n}{q_n} \right) < \dfrac{1}{q_nq_{n+1}}. \]
If we set $q_{-1} = 0$, $q_0 = 1$ the following recursive relation holds 
\begin{equation}
\label{return_times}
q_{n} = a_nq_{n-1} + q_{n-2},
\end{equation}
for $n \geq 1$. The denominators $q_n$ are called the \textit{return times} of $\alpha$. From (\ref{return_times}) is easy to show that they grow exponentially fast, in fact 
\[ q_n > (\sqrt{2})^n \text{ for } n\geq 2. \]
The Diophantine numbers can be characterized in terms of the continued fraction expansion as follows. Given $\tau \geq 0$ an irrational number $\alpha$ belongs to $ \mathcal{D}_\tau$ if and only if  
\[ \sup_n \dfrac{q_{n+1}}{q_n^{\tau+1}} < \infty. \]
The last inequality is equivalent to 
\[ \sup_n \dfrac{a_{n+1}}{q_n^{\tau}} < \infty. \] 

\subsection{Dynamical Partitions}

\label{sc: dynamical_partitions}
Let $f$ be an orientation preserving circle homeomorphism with irrational rotation number $\alpha = \rho(f) = [a_1, a_2, \dots]$. Using the return times $q_n$ of $\alpha$ described in the previous section we define a partition of $\torus$ as follows.  \\

Fix $x_0 \in \torus$ and let $x_{i} = f^i(x_0)$.  Denote by $I_n$ the circle arc $[x_0, x_{q_n})$ if $n$ is even and $[x_{q_n},x_0)$ if $n$ is odd. Let  $I_n^i = f^i\big(I_n\big)$. Then \[ \mathcal{P}_n(x_0) = \lbrace  I_{n - 1}, I_{n - 1}^1, \dots ,I_{n - 1}^{q_n - 1} \rbrace \cup \lbrace  I_n, I_{n}^1 ,\dots ,I_n^{q_{n-1}-1} \rbrace  \]
defines a partition of $\torus$. For every $x \in \torus$ let $\pn(x)$ be the atom of $\pn$ containing $x$. The next relations follow directly from the continued fraction expansion properties described in the previous section. For all $0 \leq i < q_n$ we have $I_{n + 1}^i\subset I_{n - 1}^i$ and 
\begin{equation}
\label{eq: refining_property}
 I_{n - 1}^i\setminus I_{n + 1}^i= \bigcup_{j = 0}^{a_{n + 1} - 1} I_{n}^{i + q_{n - 1} + jq_{n}}.
 \end{equation}
Hence, it is clear that $\mathcal{P}_{n+1}(x_0)$ is a refinement of $\mathcal{P}_n(x_0)$ where each $I_{n - 1}^i\in \mathcal{P}_{n}(x_0)$ is divided into $a_{n+1} + 1$ pieces of $\pnp(x_0)$. The RHS of (\ref{eq: refining_property}) is the union of $a_{n+1}$ different iterates by $f$ of $I_n$, which are actually adjacent intervals in the circle.  In the following we denote these adjacent intervals by 
 \[ \Delta^{(n)}_{i, j} =  I_{n}^{i + q_{n - 1} + jq_{n}},\]
 for all $0 \leq i < q_n$, $0 \leq j < a_{n + 1}$. To simplify the notation, for $i = 0$, we write simply $\Delta^{(n)}_j$ instead of $\Delta^{(n)}_{0, j}$.  \\
 
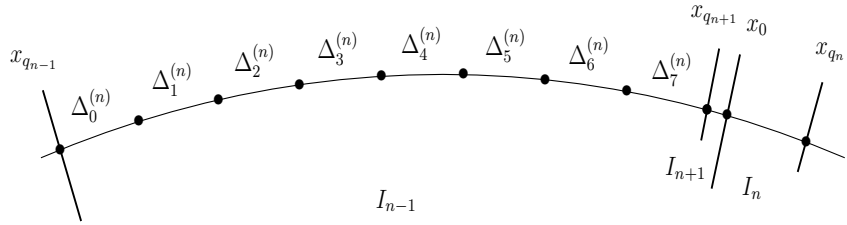
\begin{figure}[h!]
    \centering
    \resizebox{11.5cm}{3cm}
    {
        \begin{tikzpicture}
            \def\xn{72}
            \def\xz{76}
            \def\xnp{77}
            \def\xnm{109}
            \def\r{22}
            \def\uplabel{23.5}
            \def\lowlabel{21}
            \def\lowestlabel{20}
            \draw [domain=70:110] plot ({22 * cos(\x)}, {22 * sin(\x)});
            \foreach \counter  [count=\xi, evaluate=\xi as \xx using int(\xi - 1)] in {1, ..., 8}
            {  
            	\node at (\xnm - \counter*\xz + \counter*\xn + 2:22.5cm) {$\Delta_{\xx}^{(n)}$};	
            	\node at (\xnm - \counter*\xz + \counter*\xn:22cm) {\textbullet};
            }
            \node at (\xnm :\r cm) {\textbullet};
            \node at (\xnp :\r cm) {\textbullet};
            \node at (\xn :\r cm) {\textbullet};
            \node at (\xz :\r cm) {\textbullet};
            \node at (\xn: \uplabel cm) {$x_{q_n}$};
            \node at (\xz - 0.5 : \uplabel cm) {$x_0$};
            \node at (\xnp+ 0.5 : \uplabel cm) {$x_{q_{n + 1}}$};
            \node at (\xnm: \uplabel cm) {$x_{q_{n - 1}}$};
            \node at (\xnm/2 + \xz/2 : \lowestlabel cm) {$I_{n - 1}$};
            \node at (\xnp/2 + \xz/2 + 1: \lowlabel cm) {$I_{n + 1}$};
            \node at (\xz/2 + \xn/2 : \lowlabel cm) {$I_{n}$};
            \foreach \angle [count=\xi, evaluate=\xi as \xx using int(\xi*10)] in {\xn, \xnp}
            	\draw[line width=1pt] (\angle:21.5cm) -- (\angle:23cm);
            \foreach \angle [count=\xi, evaluate=\xi as \xx using int(\xi*10)] in {\xz, \xnm}
            	\draw[line width=1pt] (\angle:20.8cm) -- (\angle:23cm);
        \end{tikzpicture}
	}        
    \caption{Refinement of $I_{n - 1}$ for a rigid rotation with $a_n = 7$.}
\end{figure}
 
Let $f$ be a multicritical circle map with $N$ different critical points. Let $c$ be a critical point of $f$ and consider the dynamical partition associated to it. To simplify the notation let us write $\pn$ instead of $\pn(c)$.  Let
 \[ T_n = f^{q_n}\mid_{\dnm \setminus \dnp}.\] 
Denote
\[ r_n = \# \left\{ 0 <  j < a_{n + 1}\, \Big| \, \overline{\Delta}_j^{(n)} \text{ contains a critical point of } T_n \text{ or } j = a_{n + 1} - 1\right\}.\]
Define
\[ k_0^{(n)} = 0,\hone k_{r_n}^{(n)} = a_{n + 1} - 1,\]
and let 
\[0 < k_1^{(n)} < k_2^{(n)} < \dots < k_{r_n - 1}^{(n)} < a_{n + 1} - 1,\]
 be the indices of the intervals in $\{ \Delta_j^{(n)}\}_{0 <  j < a_{n + 1} - 1}$ whose closure contains a critical point of $T_n$. 
The integers $k_i^{(n)}$ are called the \textit{critical times} of $T_n$ and the corresponding intervals $\Delta_{k_s}^{(n)}$ are called the \textit{critical spots} of $T_n$.  Since the intervals  $I_{n - 1}, I_{n - 1}^1, \dots ,I_{n - 1}^{q_n - 1}$ are disjoint it follows that $T_n$ admits at most $N$ critical points. Therefore 
\[ r_n \leq 2N + 1.\]
 For each $0 \leq s \leq r_n$ we denote by $G_s^{(n)}$ the gap between two consecutive critical spots, namely 
\begin{equation}
\label{eq: bridge}
 G_{s}^{(n)} =  \bigcup_{j = k_s^{(n)}+ 1}^{k_{s + 1}^{(n)}- 1} \Delta^{(n)}_j.
 \end{equation}
 The set $G_s^{(n)}$ is called the $s$-th \textit{bridge} of $I_n$. Notice that each bridge $G_s^{(n)}$ is the union of  $k_{s + 1}^{(n)} - k_s^{(n)} - 1$  adjacent intervals. Following the nomenclature in \cite{estevez_real_2018} we call the bridges $G_0^{(n)}, \dots, G_{r_n}^{(n)}$ \textit{primary bridges} and we refer to its iterates by $f$, which we denote
 \[ G_{i , s}^{(n)} = f^i\left( G_{s}^{(n)}  \right),\]
 as \textit{secondary bridges}. By (\ref{eq: refining_property})

\begin{equation}
\label{eq: critical_spots_bridges}
 I_{n - 1}^i\setminus I_{n + 1}^i= \bigcup_{s = 0}^{r_n} \Delta_{i, k_s^{(n)}}^{(n)} \cup G_{i, s}^{(n)}.
 \end{equation} 
 
In the following we denote the \textit{Lebesgue measure} of a set $A \subset \torus$ by $|A|$. Given a positive constant $M$ and two positive real numbers $x, y$ we write $x \asymp_M y$  if the inequality
\[ M^{-1} y <  x < M y\]
is verified. We now summarise some of the geometric properties of dynamical partitions associated to critical points of multicritical circle maps. All of the properties we state here, except for the first one which goes back to Herman \cite{herman_conjugaison_1988} and \'Swi\k{a}tek \cite{swiatek_rational_1988}, follow from the results of Estevez and de Faria in \cite{estevez_real_2018}. 

\begin{thm}
\label{thm: real_bounds}
There exists a positive constant $M$ such that for any irrational multicritical circle map $f$ of class $C^3$ and for all $n \geq n_0$, where $n_0$ is a sufficiently large natural number depending only on $f$,  the partition $\pn$, associated to any of its critical points, satisfies the following:

\begin{enumerate}
    \item If $I, J$ are any two adjacent atoms of $\pn$ then 
    \[ |I| \asymp_M |J|.\]
    \item For each non-empty bridge $G_{i, s}^{(n)}$ 
    \[  |I_{n - 1}^i| \asymp_M \big|G_{i, s}^{(n)}\big|.\]
    \item For all $0 \leq i < q_{n}$ and   all $0 \leq s \leq r_n$ 
    \[  |I_{n - 1}^i| \asymp_M \left|\Delta_{i, k_s^{(n)}}^{(n)}\right|. \]
    \item For all $0 \leq i < q_{n}$,  all $0 \leq s \leq r_n$ and all $k_s^{(n)}  < j < k_s^{(n + 1)}$\begin{equation}
    \label{eq: quadratic_intervals}
    \big|\Delta^{(n)}_{i, j}\big| \asymp_M \frac{|\dnmi|}{\min\{j - k_s^{(n)}, k_s^{(n + 1)} - j\}^2}.
    \end{equation}
    \item If $\rho(f) = [a_1, a_2, \dots ]$ 
    \begin{equation}
    \label{eq: smallest_interval}
    \min_{I \in \pn} |I| \geq \frac{M^{-n}}{(a_1 a_2 \cdots a_n)^2}.
    \end{equation}
\end{enumerate}
\end{thm}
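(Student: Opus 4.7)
The plan is to follow the approach of Estevez and de Faria, building on the classical real bounds of Herman and \swiatek. Property (1) is the real bounds theorem itself, and its proof in the multicritical setting rests on cross-ratio distortion estimates. Since $f$ is of class $C^3$, the Schwarzian-type inequality (or equivalently, the Yoccoz cross-ratio inequality) gives a uniform bound for the distortion of the iterates $f^k$ along orbits whose images avoid the critical points. Feeding this into the inductive scheme that tracks the first-return maps $T_n = f^{q_n}|_{\dnm \setminus \dnp}$ at successive renormalization levels yields the comparability of lengths of adjacent atoms, uniformly in $n \geq n_0$.

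Properties (2) and (3) then follow from (1) applied at the appropriate level. For (3), the critical spot $\Delta_{i, k_s^{(n)}}^{(n)}$ sits between two atoms of $\pn$, so by (1) its length is comparable to that of $\dnmi$. For (2), each non-empty primary bridge $G_s^{(n)}$ is a union of at most $a_{n+1}$ atoms of $\pnp$ that are consecutively adjacent inside $\dnm \setminus \dnp$; since the number of critical spots is bounded by $2N+1$ and the critical spots themselves, together with $\dnp$, already account for a definite fraction of $|\dnm|$ by (3) and the real bounds, the complement (a union of finitely many bridges) must also contain intervals of total length comparable to $|\dnm|$; pushing forward by $f^i$ and using invariance of the partition structure gives the estimate for arbitrary $i$.

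For the quadratic estimate (4), the key observation is that between two consecutive critical times $k_s^{(n)}$ and $k_{s+1}^{(n)}$ the map $T_n$ is a diffeomorphism without critical points on the subinterval of $\dnm$ containing the relevant $\Delta^{(n)}_{i,j}$. Applying Koebe-type distortion control, made available by the $C^3$ hypothesis via the cross-ratio inequality, to $T_n^j$ restricted to such a diffeomorphic domain, and using that the total length of the $\Delta^{(n)}_{i,j}$ over $k_s^{(n)} < j < k_{s+1}^{(n)}$ is comparable to $|\dnmi|$ by (2), one obtains the classical $1/j^2$ Denjoy--Koksma-type decay with respect to the distance to the nearer critical-time endpoint, which is precisely (\ref{eq: quadratic_intervals}). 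The lower bound (5) is then a routine iteration: at level $n$, the smallest atom has length at least $C^{-1} |\dnmi|/a_{n+1}^2$ by (4) specialized to $j = k_s^{(n)} + 1$ or $j = k_s^{(n+1)} - 1$, and iterating this estimate down to level $0$ yields the product bound with $M^{-n}$ absorbing all the multiplicative constants.

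The principal obstacle lies in establishing (1) in the presence of multiple critical points. In contrast with the unicritical case, where a single polynomial singularity can be handled by a direct conjugation to a power map, here one must simultaneously control the cross-ratio distortion along orbits that may visit neighborhoods of several distinct critical points within a single period $q_n$. Managing this requires the refined induction of Estevez and de Faria, in which the comparability constant $M$ is shown to be stable under renormalization despite the possibly complicated interleaving of the various critical orbits inside $\pn$.
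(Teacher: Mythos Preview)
Your outline captures the overall architecture, but the argument you give for assertion (4) has a genuine gap. You write that between consecutive critical times $T_n$ is a diffeomorphism, and that ``Koebe-type distortion control'' applied to $T_n^j$ then yields the $1/j^2$ decay. Koebe distortion by itself only gives bounded nonlinearity; it does not produce the precise quadratic asymptotic $|\Delta^{(n)}_{i,j}| \asymp |\dnmi|/\min\{j-k_s^{(n)}, k_{s+1}^{(n)}-j\}^2$. The mechanism behind that estimate is different: one must show that the return map $f^{q_n}$ restricted to the reduced bridge $B_s^{(n)}$ is an \emph{almost parabolic map} (a map with negative Schwarzian derivative sending a chain of adjacent fundamental domains onto the shifted chain), and then invoke Yoccoz's Lemma, which is exactly the statement that fundamental domains of an almost parabolic map have lengths comparable to $|I|/\min\{j,a-j\}^2$. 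The negative-Schwarzian property on the reduced bridge is not automatic and is the content of Proposition~4.2 in Estevez--de Faria; your sketch omits both this step and Yoccoz's Lemma entirely. Koebe's principle enters only afterwards, to transport the estimate from $i=0$ to the other iterates $0 \leq i < q_n$.

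A smaller issue: your derivation of (3) from (1) is not right as stated. The critical spot $\Delta_{i,k_s^{(n)}}^{(n)}$ is an atom of $\pnp$ lying \emph{inside} $\dnmi$, not an interval sitting between two atoms of $\pn$; so comparability of adjacent atoms of $\pn$ does not directly bound its length against $|\dnmi|$. In the paper's treatment, (2) and (3) are quoted from Proposition~4.1 of Estevez--de Faria and require a separate argument (essentially the observation that the extreme subintervals of the refinement of $\dnmi$ are comparable to $\dnmi$, which uses the real bounds at level $n+1$ together with the positioning of the critical spots at the ends of the bridges). Your arguments for (1) and (5) are in line with the paper's.
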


\begin{figure}[h!]
    \centering
    \resizebox{11.5cm}{3cm}
    {
    	\begin{tikzpicture}
            \draw [domain=70:110] plot ({22 * cos(\x)}, {22 * sin(\x)});
            \def\xn{72}
            \def\xz{82}
            \def\xnp{87}
            \def\xnm{109}
            \def\r{22}
            \def\uplabel{23.5}
            \def\llabel{21.3}
            \def\lowlabel{21}
            \def\lowestlabel{20}
            \def\st{1.5}
            \foreach \counter  [count=\xi, evaluate=\xi as \xx using int(\xi - 1)] in {1, ..., 4}
              	\node[scale=0.02cm] at (\xnm  -  2 * \st  :22cm) {\textbullet};
            \node[scale=0.03cm] at (\xnm  -  2 * \st -  1.5  :22cm) {\textbullet};
            \node[scale=0.03cm] at (\xnm  -  2 * \st -  1.5  - 1.8 / 2.2 :22cm) {\textbullet};
            \node[scale=0.03cm] at (\xnm  -  2 * \st -  1.5  - 1.5  / 2 - 1.8/ 3.2 :22cm) {\textbullet};
            \node[scale=0.03cm] at (\xz/2 + \xnm/2 + 3 +\st + 1.5  :22cm) {\textbullet};
            \node[scale=0.03cm] at (\xz/2 + \xnm/2 + 3 +\st + 1.5 + 1.8 / 2.2  :22cm) {\textbullet};
            \node[scale=0.03cm] at (\xz/2 + \xnm/2 + 3 +\st + 1.5 + 1.5 / 2 + 1.8 / 3.2  :22cm) {\textbullet};
            \node[scale=0.03cm] at (\xz/2 + \xnm/2 + 3 - \st-  2  :22cm) {\textbullet};
            \node[scale=0.03cm] at (\xz/2 + \xnm/2 + 3 - \st -  2  - 1.8 / 2.2 :22cm) {\textbullet};
            \node[scale=0.03cm] at (\xz/2 + \xnm/2 + 3 - \st -  2  - 1.5  / 2 - 1.8/ 3.2 :22cm) {\textbullet};
            \node[scale=0.03cm] at (\xnp + 2*\st  + 2  :22cm) {\textbullet};
            \node[scale=0.03cm] at (\xnp + 2*\st +  2 + 1.8 / 2.2  :22cm) {\textbullet};
            \node[scale=0.03cm] at (\xnp + 2*\st  + 2 + 1.5 / 2 + 1.8 /3.2  :22cm) {\textbullet};
            \node at (\xn: \uplabel cm) {$x_{q_n}$};
            \node at (\xz - 0.5 : \uplabel cm) {$x_0$};
            \node at (\xnp+ 0.5 : \uplabel cm) {$x_{q_{n + 1}}$};
            \node at (\xnm: \uplabel cm) {$x_{q_{n - 1}}$};
            \node [color=red] at (\xz/2 + \xnm/2 + 4: \r cm) {\pgfuseplotmark{square*}};
            \node at (\xnp + \st : \llabel  cm) {$\Delta^{(n)}_{k_2^{(n)}}$};
            \node at (\xz/2 + \xnm/2 + 3: \llabel  cm) {$\Delta^{(n)}_{k_1^{(n)}}$};
            \node at (\xnm - \st: \llabel  cm) {$\Delta^{(n)}_{k_0^{(n)}}$};
            \node at (\xz/2 + \xnm/2 + 3 +\st:\r cm) {\textbullet};
            \node at (\xz/2 + \xnm/2 + 3 - \st:\r cm) {\textbullet};
            \node at (\xnm - 2 * \st:\r cm) {\textbullet};
            \node at (\xnp + 2 * \st:\r cm) {\textbullet};
            \node at (\xnm :\r cm) {\textbullet};
            \node at (\xnp :\r cm) {\textbullet};
            \node at (\xn :\r cm) {\textbullet};
            \node at (\xz :\r cm) {\textbullet};
            \draw[line width=1pt] (\xz/2 + \xnm/2 + 3 +\st:21.7cm) -- (\xz/2 + \xnm/2 + 3 + \st:22.3cm);
            \draw[line width=1pt] (\xz/2 + \xnm/2 + 3 - \st:21.7cm) -- (\xz/2 + \xnm/2 + 3 - \st:22.3cm);
            \draw[line width=1pt] (\xnm - 2 * \st:21.7cm) -- (\xnm -  2 * \st :22.3cm);
            \draw[line width=1pt] (\xnp + 2 * \st:21.7cm) -- (\xnp + 2 * \st :22.3cm);
            \node at (\xnm/2 + \xz/2 : \lowestlabel cm) {$I_{n - 1}$};
            \node at (\xnp/2 + \xz/2: \lowlabel cm) {$I_{n + 1}$};
            \node at (\xz/2 + \xn/2 : \lowlabel cm) {$I_{n}$};
            \foreach \angle [count=\xi, evaluate=\xi as \xx using int(\xi*10)] in {\xn, \xnp}
            	\draw[line width=1pt] (\angle:21.5cm) -- (\angle:23cm);
            \foreach \angle [count=\xi, evaluate=\xi as \xx using int(\xi*10)] in {\xz, \xnm}
            	\draw[line width=1pt] (\angle:20.8cm) -- (\angle:23cm);
    \end{tikzpicture}
    }
    \caption{Refinement of $I_{n - 1}$ for a multicritical circle map with 3 different critical times.}
\end{figure}
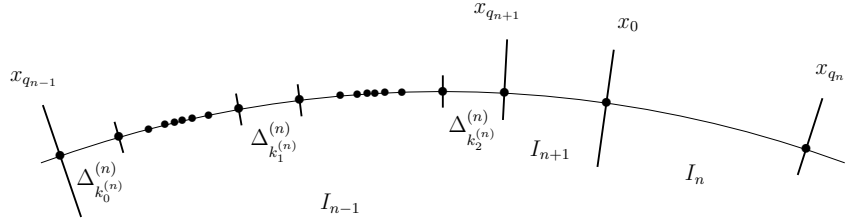

Let us make a few comments about Theorem \ref{thm: real_bounds}. We stress the fact that the constant $M$ in the Theorem is \textit{universal}, that is, independent of the map $f$. Estimates of this type are sometimes called \textit{beau} after the seminal works of Sullivan \cite{sullivan_bounds_1988}. \\

The first assertion is a classical result in the theory of critical circle homeomorphisms and, as we mentioned before, was initially proved by Herman \cite{herman_conjugaison_1988} and \'Swi\k{a}tek \cite{swiatek_rational_1988}. It states the \textit{bounded geometry} of the system, that is, the fact that any two neighbouring atoms of the dynamical partitions have comparable size. This appears in sharp contrast to the case of circle diffeomorphisms where the length ratio of the neighbouring intervals $\dnm, \dn$ in the dynamical partition can be arbitrarily small. In fact, for an irrational rigid rotation $x \mapsto x + \alpha$ with $\alpha = [a_1, a_2, \dots]$, the dynamical partition associated to any point in $\torus$ satisfies 
\[ \frac{\big|\dnm\big|}{\big|\dn\big|} > a_{n + 1}.\]
A recent proof of the first assertion can be found in \cite{estevez_real_2018}. The second and third claims are proven in Proposition 4.1 of \cite{estevez_real_2018}. These assertions can be regarded as the generalization of the following known fact about the geometry of dynamical partitions of multicritical circle maps,  which we quote from \cite{graczyk_singular_1993}: For any element of any dynamical partition, the ratios of its length to the length of the extreme intervals of the next partition subdividing it are bounded by a uniform constant. \\

The last two properties, although not explicitly stated in \cite{estevez_real_2018}, follow directly from the results therein proved. Notice that the last claim is a consequence of the fourth assertion, up to increase the constant $M$ if necessary, by a simple inductive argument. In the unicritical case this was established by de Faria and de Melo in \cite{de_faria_rigidity_1999}. \\

Particular instances of the fourth assertion are used in \cite{estevez_real_2018}, specially in the construction of a \textit{balanced decomposition} for the bridges in the dynamical partition.  For completeness and for the convenience of the reader, let us sketch the proof of this assertion. We start by introducing the main tools appearing in the proof, which, although will not be explicitly used in latter sections, are implicitly at the core of this work. \\

\textbf{Schwarzian derivative.} For a $C^3$ map $f$ defined on an interval or $\torus$ its \textit{Schwarzian derivative} at any non-critical point is given by: 
\[ Sf(x) = \dfrac{D^3f(x)}{Df(x)}-\dfrac{3}{2}\left( \dfrac{D^2f(x)}{Df(x)}\right)^2.\] 

\textbf{Almost parabolic maps.} Let $ a \geq 0$ and $I_0,...,I_{a+1}$ be adjacent intervals on the circle. A negative Schwarzian derivative map 
\[ f: I_0 \cup \dots \cup I_{a}  \rightarrow  I_1 \cup \dots \cup I_{a + 1} \]
such that $f(I_i)=I_{i+1}$ is called an \textit{almost parabolic map}. The intervals $I_0,...,I_{a + 1}$ are called the \textit{fundamental domains} of $f$. The following geometric estimate is due to Yoccoz. See \cite{de_faria_rigidity_1999} appendix B for a proof. 

\begin{lem}
\label{Yoccoz_Lemma}
\textup{\textbf{(Yoccoz's Lemma)}} Let $f: I = I_0 \cup \dots \cup I_a  \rightarrow I_1 \cup \dots \cup I_{a + 1}$ be an almost parabolic map and let $\sigma > 0$ obeying 
\[|I_0|,|I_a|  \geq \sigma |I|.\]
There exists a positive constant $C$, depending only on $\sigma$, such that
\[ \dfrac{C^{-1}|I|}{\min\lbrace j,a-j \rbrace ^2} \leq |I_j| \leq  \dfrac{C|I|}{\min\lbrace j,a-j \rbrace^2} \]
for all $0 <  j < a.$
\end{lem}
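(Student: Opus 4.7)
The strategy is to combine the cross-ratio distortion principle, which is the standard consequence of the negative Schwarzian hypothesis, with a coordinate change that linearises the near-parabolic dynamics. The boundary conditions $|I_0|,|I_a| \geq \sigma|I|$ provide the Koebe space needed to make all distortion constants depend on $\sigma$ alone.

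As a first step I would recall the cross-ratio inequality: if $g$ is a smooth diffeomorphism between intervals with $Sg < 0$, then for any ordered quadruple $x_1 < x_2 < x_3 < x_4$ in the domain the cross-ratio $[(x_3-x_1)(x_4-x_2)]/[(x_2-x_1)(x_4-x_3)]$ does not decrease under $g$. Applying this iteratively to $f$, together with the Koebe space provided by $|I_0|, |I_a| \geq \sigma |I|$, yields a uniform distortion bound of the form: for every subinterval $J \subset I_0 \cup \dots \cup I_{a-k}$, the ratio between $|f^k(J)|/|J|$ and the analogous ratio on a definite reference interval is bounded by a $\sigma$-dependent constant.

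The heart of the argument is to produce a quantity that evolves almost linearly under $f$. Denote by $y_0 < y_1 < \dots < y_{a+1}$ the endpoints of the intervals $I_j$, so that $f(y_j) = y_{j+1}$, and for $0 \leq j \leq \lfloor a/2\rfloor$ consider the tail sum
\[ \xi_j \;=\; y_{\lfloor a/2\rfloor + 1} - y_j \;=\; \sum_{i = j}^{\lfloor a/2\rfloor} |I_i|. \]
Using the cross-ratio inequality on quadruples of the form $(y_0, y_j, y_{j+1}, y_{\lfloor a/2\rfloor + 1})$ and their images under $f$, I would derive a near-arithmetic recursion
\[ \frac{c_1}{|I|} \;\leq\; \frac{1}{\xi_{j+1}} - \frac{1}{\xi_j} \;\leq\; \frac{c_2}{|I|}, \]
with $c_1, c_2 > 0$ depending only on $\sigma$. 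Summing and using $\xi_0 \asymp_{1/\sigma} |I|$ then gives $\xi_j \asymp |I|/j$, and the telescoping identity $|I_j| = \xi_j - \xi_{j+1}$ yields $|I_j| \asymp |I|/j^2$. The case $j > \lfloor a/2\rfloor$ follows by a symmetric argument based on the analogous tail sum running from the right endpoint $y_{a+1}$, which uses the Koebe space $|I_a| \geq \sigma|I|$ instead.

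The main obstacle will be the recursion step, namely extracting the near-arithmetic progression for $1/\xi_j$ from the cross-ratio inequality with error terms controlled uniformly in $\sigma$. The heuristic is that the change of variable $\xi \mapsto 1/\xi$ formally conjugates a model parabolic map $x \mapsto x + c x^2$ to a translation $\xi \mapsto \xi - c$, so one expects such a linearisation to hold up to bounded distortion whenever one has definite Koebe space; the delicate point is that the constant in the recursion must degenerate only through its dependence on $\sigma$, not on $a$ or $|I|$. Once the recursion is in place, the remaining steps are routine bookkeeping.
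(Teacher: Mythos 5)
The paper does not actually prove Yoccoz's Lemma: it states it and refers the reader to Appendix~B of de Faria--de Melo's \emph{Rigidity of critical circle mappings I} \cite{de_faria_rigidity_1999} for a proof. So there is no ``paper's own proof'' to compare against; what I can do is compare your sketch to the argument in that reference.

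Your overall strategy---negative Schwarzian giving cross-ratio distortion control, the boundary hypothesis $|I_0|,|I_a|\geq\sigma|I|$ supplying Koebe space, and the $\xi\mapsto 1/\xi$ change of variable converting the near-parabolic dynamics into a near-translation---is indeed the circle of ideas used in the de Faria--de Melo proof, and your final bookkeeping (sum the recursion, use $\xi_0\asymp_{1/\sigma}|I|$, and telescope $|I_j|=\xi_j-\xi_{j+1}$, then argue symmetrically from the other endpoint) is correct modulo the recursion.

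The gap is precisely the step you flag as the ``main obstacle,'' and as stated it does not go through. First, the cross-ratio inequality applied once to $f$ on the quadruple $(y_0,y_j,y_{j+1},y_{\lfloor a/2\rfloor+1})$ compares it with its image $(y_1,y_{j+1},y_{j+2},y_{\lfloor a/2\rfloor+2})$; the outer anchor points shift, so this does not directly relate $1/\xi_j$ to $1/\xi_{j+1}$ with a fixed normalisation, and some reorganisation is needed (e.g.\ applying a distortion estimate to the full composition $f^j$, which still has negative Schwarzian, against a fixed reference quadruple). Second, the cross-ratio inequality for negative Schwarzian maps is one-sided: cross-ratios are expanded, so it can only give one of the two inequalities in $c_1/|I|\leq 1/\xi_{j+1}-1/\xi_j\leq c_2/|I|$. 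The opposite bound must come from a Koebe-type or minimum-principle argument that genuinely uses the definite size of $I_0$ and $I_a$, and that input needs to be inserted explicitly. Both issues are repairable in the spirit of \cite{de_faria_rigidity_1999}, but as written the key inequality of your proof is a plausible target rather than an established step.
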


\textbf{Distortion estimates.} Given $f : \torus \rightarrow \torus$ and two intervals $M \subset T \subset \torus$ we define the \textit{space of $M$ inside $T$} as
\[ s(M, T) = \min\left\{\frac{|L|}{|M|}, \frac{|R|}{|M|}\right\}\]
where $L$ and $R$ are the left and right components of $T \setminus M$.  A proof of the next Proposition can be found in \cite{de_melo_one-dimensional_1993} (Section IV, Theorem 3.1).

\begin{prop}
\label{Koebe}
\normalfont  \textbf{(Koebe's nonlinearity principle for real maps.)} \textit{Let $f : \torus \rightarrow \torus$ of class $C^3$ and let $\tau, l$ be positive constants. There exists $K(\tau, l, f)$ with the following property: If $T$ is an interval such that $f^i\mid_T$ is a diffeomorphism and if $\sum_{j = 0}^{i - 1} |f^{j}(T)| < l$  then for each interval $M \subset T$ obeying $s(M, T) \geq \tau$ and all $x, y \in M$
\[ \frac{1}{K} \leq \frac{Df^i(x)}{Df^i(y)} \leq K. \]  } 
\end{prop}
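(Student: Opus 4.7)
The plan is to reduce the multiplicative control of $Df^i(x)/Df^i(y)$ to the control of the distortion of a carefully chosen cross-ratio under $f^i$, and then to bound that cross-ratio distortion by telescoping along the orbit, using the $C^3$ regularity of $f$ together with the summability hypothesis $\sum_{j = 0}^{i - 1}|f^j(T)| < l$. For four ordered points $a < b < c < d$ of an interval, introduce the cross-ratio
\[ \operatorname{Cr}(a,b,c,d) = \frac{(c-a)(d-b)}{(b-a)(d-c)}. \]
A direct Taylor expansion, exploiting the fact that cross-ratios are preserved by M\"obius transformations, shows that for any $C^3$ diffeomorphism $g$ defined on $[a,d]$
\[ \frac{\operatorname{Cr}(g(a), g(b), g(c), g(d))}{\operatorname{Cr}(a,b,c,d)} = 1 + O(|d-a|^2), \]
where the implicit constant depends only on $\|Sg\|_\infty$, equivalently on the $C^3$ norm of $g$ over $[a,d]$.

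Next I would iterate this estimate. Taking $a, d$ to be the endpoints of $T$ and $b = x$, $c = y$ inside $M$, and using that $f^i|_T$ is a diffeomorphism, one can telescope
\[ \frac{\operatorname{Cr}(f^i(a), f^i(x), f^i(y), f^i(d))}{\operatorname{Cr}(a, x, y, d)} = \prod_{j=0}^{i-1}\bigl(1 + O(|f^j(T)|^2)\bigr). \]
The hypothesis $\sum_{j=0}^{i-1}|f^j(T)| < l$, combined with $|f^j(T)| < l$ for every $j$, yields
\[ \sum_{j=0}^{i-1} |f^j(T)|^2 \leq l \cdot \sum_{j=0}^{i-1} |f^j(T)| < l^2, \]
so the logarithm of the product is bounded by a constant depending only on $l$ and $f$. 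Hence the cross-ratio distortion under $f^i$ lies between $K_0(l,f)^{-1}$ and $K_0(l,f)$.

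To conclude, I would use the space condition to convert this cross-ratio bound into the desired derivative ratio bound. Write $T = L \cup M \cup R$; the hypothesis $s(M,T) \geq \tau$ forces $|L|, |R| \geq \tau |M|$, and the cross-ratio distortion bound transfers this (up to multiplicative constants depending on $K_0$) to the image decomposition $f^i(T) = f^i(L) \cup f^i(M) \cup f^i(R)$. Applying the mean value theorem to $f^i$ on $[a,x]$, $[x,y]$, $[y,d]$ and on their images, the identity on cross-ratios rewrites as an identity involving $Df^i$ at intermediate points in $L$, $M$, $R$. Passing to the limit $y \to x$ (or equivalently working with a sequence of shrinking cross-ratios whose limit is the derivative ratio) produces
\[ \frac{1}{K} \leq \frac{Df^i(x)}{Df^i(y)} \leq K \]
with $K = K(\tau, l, f)$.

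The main obstacle is the quadratic-in-length control of the one-step cross-ratio correction. This is the only place where the $C^3$ regularity of $f$ truly enters, and it is essential that the correction be $O(|d-a|^2)$ rather than $O(|d-a|)$: a merely linear correction could not be summed under the hypothesis $\sum |f^j(T)| < l$, whereas the quadratic one is summable as above. Once this M\"obius/Schwarzian estimate is in place, the rest of the argument is essentially bookkeeping with cross-ratios and the space condition.
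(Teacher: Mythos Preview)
The paper does not supply a proof of this proposition; it merely cites de~Melo--van~Strien, Section~IV, Theorem~3.1. Your strategy---telescope a one-step cross-ratio distortion estimate along the orbit and then convert the resulting bound into a derivative-ratio bound via the space hypothesis---is exactly the route taken in that reference, so in outline your proposal matches the intended argument.

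There is, however, a genuine gap in your one-step estimate. You assert that the cross-ratio correction is $1+O(|d-a|^2)$ with implied constant depending on ``$\|Sg\|_\infty$, equivalently on the $C^3$ norm of $g$''. These are \emph{not} equivalent: the Schwarzian involves division by $Dg$, so for a $C^3$ map with critical points---precisely the maps this paper is about---$Sf$ blows up like $\operatorname{dist}(\cdot,\operatorname{Crit}(f))^{-2}$ even though $\|f\|_{C^3}$ is finite. Since the hypothesis allows the orbit $\{f^j(T)\}$ to accumulate on $\operatorname{Crit}(f)$, the telescoped quantity $\sum_j \|Sf\|_{L^\infty(f^j(T))}\,|f^j(T)|^2$ is not controlled by $l$ and $f$ alone, and your argument breaks down. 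The standard fix (carried out in de~Melo--van~Strien) is to factor $f$ locally as $\phi\circ P$ with $P$ a polynomial-like map absorbing the critical points (so $SP<0$ and $P$ only expands cross-ratios) and $\phi$ a $C^3$ diffeomorphism with uniformly bounded nonlinearity $N\phi=D^2\phi/D\phi$; the $\phi$-step then distorts the cross-ratio by $1+O(|f^j(T)|)$, linear rather than quadratic, which is exactly what the hypothesis $\sum_j|f^j(T)|<l$ is made to absorb.

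A smaller issue: your closing step ``passing to the limit $y\to x$'' is muddled as written (that limit sends $Df^i(x)/Df^i(y)$ to $1$). The clean way to finish is to show, for each fixed $x\in M$, that $Df^i(x)$ is comparable to the single quantity $|f^i(M)|/|M|$, by taking a cross-ratio of the form $|T|\,|J|/(|L\cup J|\,|J\cup R|)$ with $J$ shrinking to $\{x\}$; the ratio $Df^i(x)/Df^i(y)$ then follows.
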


We have now all the tools to sketch the proof of the fourth assertion in Theorem \ref{thm: real_bounds}. Proposition 4.2 in \cite{estevez_real_2018} guarantees that for $n$ sufficiently large and for critical times $k_s^{(n)}, k_{s + 1}^{(n)}$ satisfying $$k_{s + 1}^{(n)} - k_s^{(n)} > 3$$  the map 
\[ f^{q_n} \mid_{B_s^{(n)}} :  \bigcup_{j = k_s^{(n)}+ 2}^{k_{s + 1}^{(n)} - 2} \Delta^{(n)}_j \rightarrow  \bigcup_{j = k_s^{(n)}+ 3}^{k_{s + 1}^{(n)} - 1} \Delta^{(n)}_j \]
is almost parabolic. The set 
\[ B_s^{(n)} = \bigcup_{j = k_s^{(n)}+ 2}^{k_{s + 1}^{(n)} - 2} \Delta^{(n)}_j  \subset G^{(n)}_s\]
is called the $s$-th \textit{reduced bridge}. Since  $f^{q_n} \mid_{B_s^{(n)}}$  is almost parabolic, Yoccoz's Lemma together with 1 and 2 of Theorem \ref{thm: real_bounds} imply equation (\ref{eq: quadratic_intervals}) for $i = 0$. The extension to other atoms of the partition is a consequence of Koebe's nonlinearity principle. The fact that the constant in the Theorem can be taken to be universal is a consequence of the beau bounds for cross-ratio inequalities of irrational multicritical maps proven by Estevez and de Faria in \cite{estevez_beau_2018}.

\subsection{Hausdorff dimension}

For a subset $X$ of a metric space $M$ we define its \textit{$d$-dimensional Hausdorff content} by \[ C^d_H(X) := \lim_{\epsilon \rightarrow 0 }\inf_{(U_i)} \sum_i (diam(U_i))^d ,\] where the infimum is taken over all countable covers $(U_i)$ of $X$ satisfying $diam(U_i) < \epsilon$. The \textit{Hausdorff dimension} of $X$ is given by \[ \dim_H(X) := \inf \{ d \geq 0 \mid C^d_H(X) = 0 \}. \] 
We recall that the \textit{Hausdorff dimension of a probability measure} $\mu$ over $M$ is given by
\[\dim_H(\mu) := \inf \{ \dim_H(X) \mid \mu(X) = 1 \}. \] \\ 
A proof of the following result can be found in \cite{przytycki_hausdorff_1989}.

\begin{prop}
\label{prop: frostman_Lemma}
\normalfont \textbf{(Frostman's Lemma)}. \textit{Suppose that $\mu$ is a probability Borel measure on the interval and that for $\mu$-a.e. point \[ \delta_1 \leq  \liminf_{\epsilon \rightarrow 0 } \dfrac{\log \mu(x-\epsilon, x+\epsilon)}{\log \epsilon} \leq \delta_2.\]
Then \[ \delta_1 \leq \dim_H(\mu) \leq \delta_2. \]}
\end{prop}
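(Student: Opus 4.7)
I would prove the two inequalities separately via classical arguments relating pointwise measure-density to Hausdorff dimension.

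For the lower bound $\dim_H(\mu) \geq \delta_1$, fix any $d < \delta_1$ and set
\[ X_N := \{ x : \mu(B(x, \epsilon)) \leq (2\epsilon)^d \text{ for every } \epsilon < 1/N \}. \]
The hypothesis on the liminf forces $\mu(\bigcup_N X_N) = 1$, so for $N$ large enough $\mu(X_N) \geq 1/2$. Given any Borel set $X$ with $\mu(X) = 1$ and any countable cover $\{U_i\}$ of $X \cap X_N$ by sets of diameter less than $1/N$, pick $x_i \in U_i \cap X_N$ (discarding useless $U_i$), so that $U_i \subset B(x_i, \mathrm{diam}(U_i))$ and hence $\mu(U_i) \leq 2^d (\mathrm{diam}(U_i))^d$. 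Summing,
\[ \tfrac{1}{2} \leq \mu(X \cap X_N) \leq \sum_i \mu(U_i) \leq 2^d \sum_i (\mathrm{diam}(U_i))^d, \]
whence $C_H^d(X) \geq 2^{-(d+1)} > 0$ and $\dim_H(X) \geq d$. Letting $d \uparrow \delta_1$ closes this half.

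For the upper bound $\dim_H(\mu) \leq \delta_2$, fix $d > \delta_2$, pick an intermediate $d' \in (\delta_2, d)$, and for $\eta > 0$ let
\[ E_\eta := \{ x : \text{there exists } \epsilon \in (0, \eta) \text{ with } \mu(B(x, \epsilon)) > (2\epsilon)^{d'} \}. \]
The hypothesis $\liminf \leq \delta_2 < d'$ gives $\mu(E_\eta) = 1$. The witnessing balls form a Vitali cover of $E_\eta$, from which the Vitali covering lemma extracts a countable disjoint subfamily $\{B(x_i, \epsilon_i)\}$ with $\{B(x_i, 5\epsilon_i)\}$ covering $E_\eta$. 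Using disjointness of the $B(x_i, \epsilon_i)$ together with the defining inequality $\mu(B(x_i, \epsilon_i)) > (2\epsilon_i)^{d'}$,
\[ \sum_i (10 \epsilon_i)^d \leq 10^d \eta^{d - d'} \sum_i \epsilon_i^{d'} \leq 10^d \cdot 2^{-d'} \cdot \eta^{d - d'} \sum_i \mu(B(x_i, \epsilon_i)) \leq 10^d \cdot 2^{-d'} \cdot \eta^{d - d'}. \]
As $\eta \to 0$ this exhibits covers of $E := \bigcap_k E_{1/k}$ of vanishing mesh and vanishing $d$-dimensional cost, so $\mathcal{H}^d(E) = 0$ and $\dim_H(E) \leq d$. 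Since $\mu(E) = 1$ and $d > \delta_2$ was arbitrary, $\dim_H(\mu) \leq \delta_2$.

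The technical heart is the Vitali extraction in the upper bound, which is what allows the trivial estimate $\sum_i \mu(B(x_i, \epsilon_i)) \leq 1$ to control $\sum_i \epsilon_i^{d'}$ and, through the gain $\eta^{d-d'}$ afforded by $d > d'$, to force the $d$-dimensional content of $E$ to zero. Both halves are entirely standard; the only care needed is in comparing $\mathrm{diam}(U_i)$ with radii of balls centered at points of $X_N$, and in absorbing the factor $2$ produced by $\mu(B(x, \epsilon))$ versus $\epsilon^d$ inside the liminf hypothesis.
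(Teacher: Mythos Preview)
Your argument is correct and follows the standard route: the lower bound is the mass distribution principle, and the upper bound is obtained from a Vitali $5r$-covering together with the disjointness estimate $\sum_i \mu(B(x_i,\epsilon_i))\le 1$. The paper does not give its own proof of this proposition; it merely cites \cite{przytycki_hausdorff_1989}, so there is nothing to compare your approach against.
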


\section{Proof of Theorem \ref{thm: main_thm}}

In this section $f$ will denote a $C^3$ irrational multicritical circle map with $N$ different critical points, rotation number $\alpha$ and unique invariant measure $\mu$.  We fix one of its critical points and denote by $\{ \pn\}_{n \in \N}$ its associated dynamical partitions. We will use the notations introduced in the preliminaries to denote the continued fraction and return times of $\alpha$.

 \subsection{Upper bound}
 
 We will bound from above the Hausdorff dimension of $\mu$ by constructing appropriate full $\mu$-measure sets whose Hausdorff dimension we can control. Before giving their explicit definition let us first try to motivate the construction. \\
 
 Notice that for $a_{n + 1}$ sufficiently large the union of the  ``big'' atoms of the partition $\pn$, namely the  union of the intervals $I_{n -1}, I_{n-1}^i, \dots, I_{n - 1}^{q_n - 1},$  has $\mu$-measure close to $1$. Indeed, by definition of the dynamical partitions
\begin{equation} 
\label{eq: partition_measure}
1 = q_{n + 1} \delta_{n} + q_{n}\delta_{n + 1},
\end{equation}
 for all $n \geq 1$, where $$\delta_n = \mu(I_n).$$  By  (\ref{eq: partition_measure})
\begin{equation}
\label{eq: upper_bound_delta_n}
 \delta_n \leq \frac{1}{q_{n + 1}}.
 \end{equation}
 From (\ref{return_times}) and (\ref{eq: upper_bound_delta_n})
\begin{equation}
\label{eq: small_intervals_measure} 
q_{n-1} \delta_n  \leq \dfrac{1}{a_{n+1} a_n}.
\end{equation}
Hence, by (\ref{eq: partition_measure}) and (\ref{eq: small_intervals_measure})
\[ \mu\left( \bigcup_{i = 0}^{q_n - 1} I_{n - 1}^j \right) = q_n \delta_{n - 1}  \geq 1 - \frac{1}{a_na_{n + 1}}.\]
Moreover
\[ \mu\left( \bigcup_{i = 0}^{q_n - 1} I_{n - 1}^j  \setminus I_{n + 1}^j \right) = q_n( \delta_{n - 1} - \delta_{n + 1}) \geq  1 - \frac{2}{a_{n + 1}}.\]
Let us recall equation (\ref{eq: critical_spots_bridges}), namely 
\[ I_{n - 1}^i\setminus I_{n + 1}^i =  \bigcup_{s = 0}^{r_n} \Delta^{(n)}_{i, k_s^{(n)}} \cup  G^{(n)}_{i, s},\]
where $k_0^{(n)} , k_1^{(n)} , \dots, k_{r_n}^{(n)} $ are the critical times associated to $f^{q_n}\mid_{I_{n -1} \setminus I_{n + 1}}$ and $ \Delta^{(n)}_{i, k_s^{(n)}}$, $G^{(n)}_{i, s}$ are, respectively, the iterates of the critical spots and primary bridges defined in section \ref{sc: dynamical_partitions}.  Recall also that the secondary bridge $G^{(n)}_{i, s}$ is the union of $k_{s + 1}^{(n)} - k_s^{(n)} - 1$ adjacent intervals 
\[ G^{(n)}_{i, s} = \bigcup_{j = k_s^{(n)} + 1}^{k_{s + 1} - 1} \Delta^{(n)}_{i, j}.\]
By  (\ref{eq: quadratic_intervals}) in Theorem \ref{thm: real_bounds} and provided $k_s^{(n + 1)} - k_s^{(n)}$ is sufficiently big,  the ``central intervals''  of  the secondary bridge $G_{i, s}^{(n)}$ will have a much smaller Lebesgue measure when compared to the first and last intervals. Nevertheless, as they are all iterates of $I_n$, their measure with respect to $\mu$ must be the same.  Having this in mind we construct an appropriate full measure set  to bound the Hausdorff dimension of the invariant measure $\mu$. \\

Given $0 < \gamma < 1$, denote by $G^{(n)}_{s,\gamma}$ the union of the $k^{(n)}_{s + 1} - k^{(n)}_{s} - 2 \lfloor a_{n + 1}^\gamma \rfloor$ central intervals of $G_{s}^{(n)}$ when seen as the union of adjacent atoms in $\mathcal{P}_{n + 1}$, namely 
\[G^{(n)}_{s,\gamma} = \bigcup \left\{  \Delta^{(n)}_j \, \Big| \, k_s^{(n)} + a_{n + 1}^{\gamma}< j < k_s^{(n + 1)} + 1 - a_{n + 1}^{\gamma}\right\}.\]
Let
\begin{equation}
\label{eq: central_intervals}
A^n_{i,\gamma} = \bigcup_{s = 0}^{r_n} f^i\left( G_{s, \gamma}^{(n)} \right), \hone 
 A^n_\gamma = \bigcup_{i = 0}^{q_n - 1} A^n_{i,\gamma},
 \end{equation}
for all $0 \leq i < q_n$.  We have the following.

\begin{lem}
\label{full_measure_cover}
Let $M, n_0$ as in Theorem \ref{thm: real_bounds} when applied to $f$. For any $0 <\gamma < 1$ and all $n \geq n_0$ the following holds:
\begin{enumerate}
    \item $\mu(A^n_{i,\gamma}) \geq \delta_n (a_{n + 1} - (4N + 2)a_{n + 1}^\gamma).$
    \item $\mu(A_\gamma^n) \geq \left( 1 - \frac{2}{a_{n + 1}} \right)\left( 1-\frac{4N+2}{a_{n+1}^{1 - \gamma}} \right).$
    \item $|A^n_{i,\gamma}| \leq  \frac{(4N + 2)M|\dnmi|}{\lfloor a_{n+1}^\gamma \rfloor}.$ 
    \item $|A_\gamma^n| \leq \frac{(4N + 2)M}{\lfloor a_{n+1}^\gamma \rfloor}.$
\end{enumerate}
\end{lem}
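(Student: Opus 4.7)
The proof is a bookkeeping argument combining three ingredients: the $f$-invariance of $\mu$ (so every iterate of $\dn$ carries $\mu$-mass $\delta_n$), the partition identities (\ref{eq: partition_measure})--(\ref{eq: small_intervals_measure}), and the quadratic estimate (\ref{eq: quadratic_intervals}) of Theorem \ref{thm: real_bounds}. The sets $A^n_{i,\gamma}$ are designed to select the central atoms of each bridge, which are $\mu$-heavy but Lebesgue-thin; so the measure estimates should come from counting atoms, while the Lebesgue estimates should come from summing the tail of $\sum m^{-2}$.

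\textbf{Measure estimates (1) and (2).} For (1) I would count the atoms $\Delta_j^{(n)}$, $0\leq j \leq a_{n+1}-1$, composing $\dnm\setminus\dnp$: at most $r_n+1\leq 2N+2$ of them are critical spots, and stripping $\lfloor a_{n+1}^\gamma\rfloor$ atoms from each end of each of the at most $r_n\leq 2N+1$ bridges discards a further $\leq 2r_n\lfloor a_{n+1}^\gamma\rfloor$ atoms. For $n\geq n_0$ large enough that $a_{n+1}^\gamma$ dominates the lower-order constants, the total loss is at most $(4N+2)a_{n+1}^\gamma$ atoms; each remaining atom has $\mu$-measure $\delta_n$ by $f$-invariance, giving (1). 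For (2), the sets $A^n_{i,\gamma}\subset\dnmi\setminus\dnpi$ are pairwise disjoint across $0\leq i<q_n$, so summing (1) gives
\[
\mu(A^n_\gamma)\;\geq\; q_n a_{n+1}\delta_n\left(1-\frac{4N+2}{a_{n+1}^{1-\gamma}}\right).
\]
Using (\ref{return_times}) I would rewrite $q_n a_{n+1}\delta_n=q_{n+1}\delta_n-q_{n-1}\delta_n$; by (\ref{eq: partition_measure}), $q_{n+1}\delta_n=1-q_n\delta_{n+1}$, and (\ref{eq: small_intervals_measure}) (applied at levels $n$ and $n+1$) bounds both $q_{n-1}\delta_n$ and $q_n\delta_{n+1}$ by $1/a_{n+1}$, producing the factor $1-2/a_{n+1}$.

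\textbf{Lebesgue estimates (3) and (4).} For (3), each atom $\Delta^{(n)}_{i,j}$ inside $f^i(G^{(n)}_{s,\gamma})$ satisfies $\min\{j-k_s^{(n)},\,k_{s+1}^{(n)}-j\}>\lfloor a_{n+1}^\gamma\rfloor$, so by (\ref{eq: quadratic_intervals}) combined with the elementary tail bound $\sum_{m>\lfloor a_{n+1}^\gamma\rfloor}m^{-2}\leq 1/\lfloor a_{n+1}^\gamma\rfloor$, summing over both sides of each of the $\leq 2N+1$ bridges yields the stated $(4N+2)M|\dnmi|/\lfloor a_{n+1}^\gamma\rfloor$. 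Item (4) follows by summing (3) over $i$ and using $\sum_{i=0}^{q_n-1}|\dnmi|\leq 1$, since the arcs $\dnmi$ are pairwise disjoint in $\torus$. The only delicate point---and what I expect to be the main (minor) obstacle---is absorbing the various lower-order constants (the $r_n+1$ critical spots and the floor errors in $\lfloor a_{n+1}^\gamma\rfloor$) into the clean bound $(4N+2)a_{n+1}^\gamma$ appearing in the statement, which is routine once $n\geq n_0$ is taken large enough.
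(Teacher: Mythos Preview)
Your proposal is correct and matches the paper's proof essentially line by line: counting atoms and invoking $f$-invariance of $\mu$ for (1), the identity $q_n a_{n+1}\delta_n = 1 - q_n\delta_{n+1} - q_{n-1}\delta_n$ together with (\ref{eq: small_intervals_measure}) for (2), the quadratic estimate (\ref{eq: quadratic_intervals}) plus the tail bound $\sum_{m > L} m^{-2} \leq 1/L$ for (3), and disjointness of the $\dnmi$ for (4). One cosmetic remark on your caveat at the end: the absorption of lower-order constants in (1) does not actually require taking $n$ (or $a_{n+1}$) large, since whenever $a_{n+1}^{1-\gamma} \leq 4N+2$ the right-hand side of (1) is non-positive and the inequality holds trivially.
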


\begin{proof}
By definition of $A_{i, \gamma}^n$ and the invariance of $\mu$ by $f$ we have
\begin{align*}
\mu(A_{i, \gamma}^n) & \geq \delta_n \sum_{s = 0}^{r_n} \big(k_{s + 1}^{(n)} - k_s^{(n)} -  2a_{n + 1}^\gamma \big)  \\
& \geq \delta_n (a_{n + 1} - (4N + 2)a_{n + 1}^\gamma) 
\end{align*}
which proves the first assertion. By (\ref{eq: partition_measure}) and (\ref{eq: small_intervals_measure})
 \begin{align*}
  \mu(A_\gamma^n) &= q_n\mu(A_{0, \gamma}^n) \\
  & \geq   q_n \delta_n a_{n + 1}\left(1 - \frac{4N + 2}{a_{n + 1}^{1 -\gamma}}\right) \\
& = (1-q_n\delta_{n+1} - q_{n-1}\delta_n)\left(1 - \frac{4N + 2}{a_{n + 1}^{1 -\gamma}}\right) \\
& \geq \left( 1 - \frac{2}{a_{n + 1}} \right)\left(1 - \frac{4N + 2}{a_{n + 1}^{1 -\gamma}}\right).
\end{align*}
This proves the second claim. By (\ref{eq: quadratic_intervals})
\begin{align*}
|A_{i, \gamma}^n| & = \sum_{s \in K_\gamma^{(n)}} \sum_{j = k_s^{(n)} + \lfloor a_n^{\gamma} \rfloor + 1}^{k_{s + 1}^{(n)} - \lceil a_{n + 1}^\gamma \rceil} |\Delta_j^{(n)}| \\
& \leq 2M  \sum_{s = 0}^{r_n} |\dnmi|\sum_{j = \lfloor a_n^{\gamma} \rfloor + 1}^{+\infty}  \frac{1}{j^2} \\
& = \frac{(4N + 2)M|\dnmi|}{\lfloor a_{n + 1}^\gamma \rfloor}
\end{align*}
which proves the third assertion. Since the intervals $I_{n-1}^0, I_{n - 1}^1, \dots, I_{n - 1}^{q_n - 1}$ are disjoint the last assertion follows directly from the third one.
\end{proof}
\begin{cor}
\label{prop: singular_measure}
Suppose $\rho(f)$ is not of bounded type. Then the unique invariant measure of $f$ is singular with respect to the Lebesgue measure.
\end{cor}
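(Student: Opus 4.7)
The strategy is to exhibit a Borel set $X \subset \torus$ with $\mu(X) = 1$ and $|X| = 0$, using the opposing estimates furnished by parts (2) and (4) of Lemma \ref{full_measure_cover}. The hypothesis that $\rho(f)$ is not of bounded type means exactly that the partial quotients $(a_n)$ are unbounded, so that we may extract a subsequence $(n_k)$ of indices with $a_{n_k + 1} \to \infty$ arbitrarily fast.

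Fix any $\gamma \in (0, 1)$ and choose the subsequence $(n_k)$ so rapidly that $\lfloor a_{n_k + 1}^\gamma \rfloor \geq 2^k$ for all $k$. By part (4) of Lemma \ref{full_measure_cover}, for $k$ large enough
\[ |A_\gamma^{n_k}| \leq \frac{(4N + 2)M}{\lfloor a_{n_k + 1}^\gamma \rfloor} \leq \frac{(4N + 2)M}{2^k}, \]
which is summable. On the other hand, part (2) of the same Lemma gives
\[ \mu(A_\gamma^{n_k}) \geq \left(1 - \frac{2}{a_{n_k + 1}}\right)\left(1 - \frac{4N + 2}{a_{n_k + 1}^{1 - \gamma}}\right) \xrightarrow[k \to \infty]{} 1. \]

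Set $X = \limsup_k A_\gamma^{n_k} = \bigcap_{K \geq 1} \bigcup_{k \geq K} A_\gamma^{n_k}$. Summability of $|A_\gamma^{n_k}|$ together with the first Borel--Cantelli lemma (applied to Lebesgue measure, which is finite on $\torus$) yields $|X| = 0$. Simultaneously, since $\mu$ is a probability measure, the reverse Fatou lemma for sets gives
\[ \mu(X) = \mu\bigl(\limsup_k A_\gamma^{n_k}\bigr) \geq \limsup_k \mu(A_\gamma^{n_k}) = 1. \]
Hence $\mu(X) = 1$ while $|X| = 0$, proving that $\mu$ is singular with respect to Lebesgue measure.

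There is no real obstacle here; the content is entirely in Lemma \ref{full_measure_cover}, and the corollary is a one-step Borel--Cantelli argument exploiting the fact that the sets $A_\gamma^n$ have $\mu$-mass tending to $1$ but Lebesgue measure tending to $0$ along any subsequence where $a_{n + 1} \to \infty$. The only small point worth noting is that the rate of growth of $a_{n_k + 1}$ along the extracted subsequence can be chosen freely once the partial quotients are known to be unbounded, which is precisely what guarantees summability of the Lebesgue measures.
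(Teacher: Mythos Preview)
Your proof is correct and follows the same approach as the paper: both deduce singularity directly from Lemma~\ref{full_measure_cover} by exploiting that along a subsequence with $a_{n+1}\to\infty$ the sets $A_\gamma^{n}$ have $\mu$-measure tending to $1$ while their Lebesgue measure tends to $0$. The paper's proof is a one-line reference to the lemma; you simply spell out the underlying Borel--Cantelli/reverse Fatou step, and you invoke parts (2) and (4) rather than the paper's stated ``3 and 4'' (the latter appears to be a slip, since part (3) alone gives no information about $\mu$).
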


\begin{proof}
This follows directly from $3$ and $4$ of the previous Lemma.
\end{proof}

\begin{prop}
\label{prop: upper_bound}
Suppose $\rho(f) \notin \mathcal{D}_\tau$ for some $ \tau >  0$. Then  
\[ \dim_H(\mu) \leq  \dfrac{1}{\tau + 1}.\]
\end{prop}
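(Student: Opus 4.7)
The plan is to exhibit, for every $d$ slightly larger than $1/(\tau+1)$, a Borel set $X$ of full $\mu$-measure with $\dim_H(X) \le d$. Natural candidates are $\limsup$-type sets built from the families $A^n_\gamma$ of Lemma~\ref{full_measure_cover}, sampled along a subsequence where $a_{n+1}$ is huge compared to $q_n^\tau$. Since $\rho(f) \notin \mathcal{D}_\tau$, the continued-fraction characterization in the preliminaries yields an increasing sequence $(n_k)$ with $a_{n_k+1} \ge q_{n_k}^\tau$ and $a_{n_k+1} \to \infty$.

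Fix $d > 1/(\tau+1)$ (we may assume $d < 1$) and choose $\gamma \in (0,1)$ close enough to $1$ that $\eta := d(1+\tau\gamma) - 1 > 0$. Set $X := \limsup_k A^{n_k}_\gamma$. By Lemma~\ref{full_measure_cover}(2) and $a_{n_k+1} \to \infty$, one has $\mu(A^{n_k}_\gamma) \to 1$, and the reverse Fatou lemma for the probability measure $\mu$ gives $\mu(X) = 1$.

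For the Hausdorff dimension, note that $X \subset \bigcup_{k \ge k_0} A^{n_k}_\gamma$ for every $k_0$, and each $A^{n_k}_\gamma$ is a disjoint union of at most $(2N+2)\,q_{n_k}$ intervals of the form $f^i(G^{(n_k)}_{s,\gamma})$, each of diameter at most $C\,|I^i_{n_k-1}|\,a_{n_k+1}^{-\gamma}$ by Lemma~\ref{full_measure_cover}(3) (in particular at most $C\, a_{n_k+1}^{-\gamma} \to 0$, so the cover's mesh size vanishes uniformly in $k \ge k_0$). Jensen's inequality with $d \le 1$ applied to $\sum_i |I^i_{n_k-1}| \le 1$ yields $\sum_i |I^i_{n_k-1}|^d \le q_{n_k}^{1-d}$, so the $d$-th power sum of the diameters of this cover of $A^{n_k}_\gamma$ is bounded by
\[
C' \, q_{n_k}^{1-d}\, a_{n_k+1}^{-\gamma d} \;\le\; C' \, q_{n_k}^{1 - d(1+\tau\gamma)} \;=\; C' \, q_{n_k}^{-\eta},
\]
using $a_{n_k+1} \ge q_{n_k}^\tau$. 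Since $q_{n_k} \ge (\sqrt{2})^{n_k} \ge (\sqrt{2})^{k}$, the series $\sum_k q_{n_k}^{-\eta}$ converges, so its tail $\sum_{k \ge k_0}$ vanishes as $k_0 \to \infty$. Hence $C^d_H(X) = 0$, so $\dim_H(X) \le d$, and letting $d \downarrow 1/(\tau+1)$ gives $\dim_H(\mu) \le 1/(\tau+1)$.

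The delicate point is balancing $d$ and $\gamma$: one needs $\gamma$ close enough to $1$ that $d(1+\tau\gamma) > 1$ even when $d$ is close to $1/(\tau+1)$, yet strictly less than $1$ to keep Lemma~\ref{full_measure_cover}(2) productive for the measure estimate. Everything else is bookkeeping with Jensen's inequality and the bounded-geometry estimates of Theorem~\ref{thm: real_bounds}, which are already encoded in Lemma~\ref{full_measure_cover}.
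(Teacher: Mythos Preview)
Your proof is correct and follows essentially the same approach as the paper: construct the $\limsup$ set from the $A^{n_k}_\gamma$ along a subsequence where $a_{n_k+1}\ge q_{n_k}^\tau$, show it has full $\mu$-measure via Lemma~\ref{full_measure_cover}(2), and bound its $d$-dimensional Hausdorff content using Lemma~\ref{full_measure_cover}(3), the power-mean/Jensen bound $\sum_i |I^i_{n_k-1}|^d\le q_{n_k}^{1-d}$, and exponential growth of $q_n$. Your explicit use of the finer cover by the individual intervals $f^i(G^{(n_k)}_{s,\gamma})$ and the invocation of Jensen's inequality are slightly cleaner than the paper's presentation, but the argument is the same.
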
 

\begin{proof}
Since $\rho(f) \notin \mathcal{D}_\tau$ there exists an increasing  sequence $(n_k)_{k \in \N}$ of natural numbers obeying 
\[ a_{n_k+1} \geq q_{n_k}^{\tau}\]
for all $k \in \N$. Given $0 < \gamma < 1$, let $A_{i, \gamma}^n$, $A_\gamma^n$ as in (\ref{eq: central_intervals}) and define
\[A_\gamma = \bigcap_{i \geq 1}\bigcup_{k\geq i} A^{n_k}_\gamma.\]
By Lemma  \ref{full_measure_cover} 
\[\mu(A_\gamma^{n_k}) \xrightarrow[k \to \infty]{} 1.\] 
Thus
 \[\mu(A_\gamma) = 1.\]
 By definition of Hausdorff dimension 
\[ \dim_H(\mu)  \leq \inf_{0 < \gamma < 1}\dim_H(A_\gamma).\]
Let us show that $\dim_H(A_\gamma) \leq \frac{1}{\tau + 1}$ for $\gamma$ sufficiently close to $1$. Let $d > \frac{1}{\tau+1}$, $\epsilon > 0$ and take $K$ sufficiently large so that $\text{diam}(\mathcal{P}_{n_K}) < \epsilon$. Thus 
\[ \mathcal{C} = \left\{ A_{i, \gamma}^{n_k} \mid K < k, \, 0 \leq i < q_{n_k} \right\} \]
   is an open cover of $A_\gamma$ with diameter less than $\epsilon$. Hence 
\begin{align*} 
    C^d_H(A_\gamma) & \leq \liminf_{K \rightarrow \infty} \sum_{ k > K} \sum_{i=0}^{q_{n_k}-1} |A_{i, \gamma}^{n_k}|^d \\
    & \leq \liminf_{K \rightarrow \infty} \sum_{ k > K}  \dfrac{M^d(4N + 2)^d}{\lfloor a_{n_k +1}^{\gamma} \rfloor^d } \sum_{i=0}^{q_{n_k}-1} |I^i_{n_k-1}|^d \\
    & \leq \liminf_{K \rightarrow \infty} \sum_{ k > K}  \dfrac{M^d(4N + 2)^d}{a_{n_k +1}^{d\gamma}} q_{n_k}^{1-d} \\
    &   \leq \liminf_{K \rightarrow \infty} \sum_{ k > K}  M^d(4N + 2)^d q_{n_k}^{1-d(\gamma\tau + 1)}.
\end{align*}
By hypothesis $1 < d(\tau+1)$. Since the return times $q_n$ grow at least exponentially the sum in the last inequality converges for $\gamma$ sufficiently close to 1. Thus
\[ C^d_H(A_\gamma) = 0\]
for $\gamma$ sufficiently close to $1$. Therefore
\[ \inf_{0 < \gamma < 1} \dim_H (A _\gamma) \leq \frac{1}{\tau + 1} \]
which finishes the proof. 
\end{proof}

\subsection{Lower bound}

The lower bound for the Hausdorff dimension of the unique invariant measure of $f$ will be a direct application of Frostman's Lemma (Proposition \ref{prop: frostman_Lemma}). 

\begin{prop}
\label{lowerbound}
 Suppose $\rho(f) \in \mathcal{D}_\tau$ for some $\tau \geq 0$.  Let $M > 1$ be as in Theorem \ref{thm: real_bounds}. Then 
\[ \dim_H(\mu) \geq \dfrac{1}{2\tau +\nu_1 + \nu_2\log M },\]
where
 \[\nu_1 = \limsup_{n \to \infty} \frac{2\log(a_1a_2\dots a_{n})}{\log q_n}, \hone \nu_2 = \limsup_{n \to \infty} \frac{n}{\log q_n}.\]
\end{prop}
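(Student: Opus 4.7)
The plan is to apply Frostman's Lemma (Proposition \ref{prop: frostman_Lemma}), which reduces the claim to showing that, for every $x \in \torus$,
\[ \liminf_{\epsilon \to 0} \dfrac{\log \mu(x-\epsilon, x+\epsilon)}{\log \epsilon} \geq \dfrac{1}{2\tau + \nu_1 + \nu_2 \log M}. \]
The strategy is to calibrate, for each small $\epsilon$, a partition level $n = n(\epsilon)$ so that both the numerator and the denominator can be read off from the properties of $\pn$ collected in Theorem \ref{thm: real_bounds}.

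Given $\epsilon > 0$ small, I will take $n$ to be the largest integer such that $\min_{I \in \pn} |I| \geq \epsilon$; this is well-defined and tends to infinity as $\epsilon \to 0$ by (\ref{eq: smallest_interval}). With this choice, the ball $B = (x - \epsilon, x + \epsilon)$, being of length $2\epsilon$, can intersect at most three atoms of $\pn$: the atom $\pn(x)$ containing $x$ and at most one neighbour on each side (a neighbour of length at least $\epsilon$ absorbs the at most $\epsilon$ overhang of $B$ past $\pn(x)$). Since every atom of $\pn$ is an iterate of $\dnm$ or $\dn$, the $f$-invariance of $\mu$ together with (\ref{eq: upper_bound_delta_n}) gives that each atom of $\pn$ has mass at most $\delta_{n-1} \leq 1/q_n$, whence $\mu(B) \leq 3/q_n$.

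To lower-bound $\epsilon$ in terms of $n$, I will use maximality of $n$: some atom of $\pnp$ has length less than $\epsilon$, so (\ref{eq: smallest_interval}) applied at level $n + 1$ yields $\epsilon > M^{-(n + 1)}(a_1 \cdots a_{n+1})^{-2}$. Combining these two estimates,
\[ \dfrac{\log \mu(B)}{\log \epsilon} \geq \dfrac{\log q_n - \log 3}{(n+1)\log M + 2\log(a_1 \cdots a_{n+1})}. \]
Letting $\epsilon \to 0$ (equivalently $n \to \infty$) and splitting the denominator into the three pieces $(n+1)\log M$, $2\log(a_1 \cdots a_n)$ and $2\log a_{n+1}$, the first two $\limsup$'s are $\nu_2 \log M$ and $\nu_1$ by definition. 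The Diophantine hypothesis $\rho(f) \in \mathcal{D}_\tau$ is equivalent to $a_{n+1} \leq C q_n^\tau$ for some constant $C$, which bounds the third $\limsup$ by $2\tau$. Subadditivity of $\limsup$ then yields the claimed bound.

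The only subtle point I expect is the choice of scale. Selecting $n$ from the size of the single atom $\pn(x)$ would allow $B$ to straddle many tiny subdivisions of a single $\dnm^{i}$ at the next level, spoiling the three-atom covering; calibrating instead against the \emph{global} minimum of $\pn$ circumvents this, at the cost of the uniform factor $M^{-n}$ in (\ref{eq: smallest_interval}) surfacing as the $\nu_2 \log M$ correction in the denominator. Everything else is a routine combination of Theorem \ref{thm: real_bounds} with the continued-fraction characterisation of $\mathcal{D}_\tau$.
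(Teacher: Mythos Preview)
Your proof is correct and follows essentially the same route as the paper's: apply Frostman's Lemma, calibrate a scale $n$ against $\epsilon$, bound $\mu(B_\epsilon(x))$ by a constant times $1/q_n$ via invariance and (\ref{eq: upper_bound_delta_n}), bound $\epsilon$ below via (\ref{eq: smallest_interval}), and pass to the limit using the Diophantine characterisation $a_{n+1} \leq C q_n^{\tau}$. The only difference is cosmetic: the paper sets $n(x,\epsilon)=\min\{k:\exists\,\Delta\in\mathcal{P}_{k+1},\ \Delta\subset B_\epsilon(x)\}$, which forces $B_\epsilon(x)$ to be covered by \emph{two} adjacent atoms of $\pn$, whereas your global calibration (largest $n$ with $\min_{I\in\pn}|I|\geq\epsilon$) gives at most three; the extra constant is of course irrelevant in the limit.
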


\begin{proof}
By Theorem \ref{thm: real_bounds} there exists a  natural number $n_0$ such that
\[ \min_{\Delta \in \mathcal{P}_{n + 1}} |\Delta| > \frac{M^{-(n + 1)}}{(a_1 \cdots a_{n + 1})^2},\] 
for all $n \geq n_0$. Let $x \in \torus$ such that $x$ is not an end point of any of the intervals of the dynamical partitions $\{ \pn\}_{n \in \N}$. Let
\[ \Gamma = \sup_{n \in \N} \frac{a_{n + 1}}{q_n^{\tau}}.\]
Notice that $\Gamma < +\infty$ since $\rho(f) \in \mathcal{D}_\tau$.   Let  $0 < \epsilon < 1$ and define 
\[ n(x, \epsilon) = \min \left\{k \in \N \,\mid\, \exists \Delta \in \mathcal{P}_{k + 1} \text{ s.t. } \Delta \subset B_\epsilon(x) \right\}.\]
Let $\epsilon$ sufficiently small so that $n = n(x, \epsilon) > n_0$. Notice that $B_\epsilon(x)$ must be contained in the union of two adjacent elements  $\Delta_1,\Delta_2 \in \mathcal{P}_n$. Let $\Delta \in \pnp$ such that $\Delta \subset B_\epsilon(x)$.  Hence
\[ \epsilon \geq |\Delta| , \hone \mu(B_\epsilon(x)) \leq 2 \mu(\Delta_0^{(n-1)}) \leq 2 q_n^{-1},\]
which yields to
\begin{align*}
\dfrac{\log \mu(B_\epsilon(x))}{\log \epsilon} & \geq \dfrac{\log 2\mu(\Delta_0^{(n-1)})}{\log |\Delta|} \\
& \geq \dfrac{\log q_n - \log 2}{\log (a_1a_2\dots a_{n+1})^2 + \log M^{n+1}} \\
& \geq \dfrac{\log q_n - \log 2}{2\tau\log (\Gamma q_n) + 2\log(a_1a_2...a_n) + (n+1)\log M }.
\end{align*}
Therefore
\[ \liminf _{\epsilon \to 0}  \dfrac{\log \mu(B_\epsilon(x))}{\log \epsilon}  \geq \frac{1}{2\tau + \nu_1 + \nu_2 \log M}.\]
The result follows by Frostman's Lemma.
\end{proof}

\bibliographystyle{acm}
\bibliography{bibliography.bib}

\end{document}